\documentclass[10pt,reqno]{amsart}
\usepackage{amsmath,amsopn,amssymb,amsthm}
\usepackage[cp1251]{inputenc}
\usepackage[english]{babel}
\usepackage[pagebackref,breaklinks=true,colorlinks=true,linkcolor=blue,citecolor=blue,urlcolor=blue]{hyperref}
\usepackage{graphicx}%
\usepackage{color}

\voffset -1.3cm
\hoffset -1.1cm
\textwidth 14.5cm
\textheight 22cm
\newtheorem{theorem}{Theorem}[section]

\newtheorem{lemma}[theorem]{Lemma}

\renewenvironment{proof}[1][Proof]{\noindent\textbf{#1.} }{\ \rule{0.5em}{0.5em}}

\begin{document}

\title{Randers and $(\alpha,\beta)$ equigeodesics for some compact homogeneous manifolds}
\author{Ju Tan}
\address[Ju Tan]{School of Mathematics and Physics,
Anhui University of Technology, Maanshan, 243032, P.R. China}\email{tanju2007@163.com}
\author{Ming Xu$^*$}
\address[Ming Xu] {School of Mathematical Sciences,
Capital Normal University,
Beijing 100048,
P.R. China}
\email{mgmgmgxu@163.com}

\thanks{$^*$Ming Xu is the corresponding author.}

\begin{abstract}
A smooth curve on $G/H$ is called a Riemannian equigeodesic if it is a homogeneous geodesic for all $G$-invariant Riemannian metrics on $G/H$. With the $G$-invariant Riemannian metric replaced by other classes of $G$-invariant metrics, we can similarly define Finsler equigeodesic, Randers equigeodesic, $(\alpha,\beta)$ equigeodesic, etc.
In this paper, we study Randers and $(\alpha,\beta)$ equigeodesics. For a compact homogeneous manifold, we prove Randers and $(\alpha,\beta)$ equigeodesics are equivalent, and find a criterion for them. Using this criterion
we can classify the equigeodesics on many compact homogeneous manifolds which permit non-Riemannian homogeneous Randers metrics, including four classes of homogeneous spheres.
\vbox{}\\
\\
Mathematics Subject Classification(2010): 53C22, 53C30, 53C60.
\vbox{} \\
\\
Keywords: equigeodesic, equigeodesic vector, homogeneous $(\alpha,\beta)$ metric, homogeneous gedesics, homogeneous manifold, homogeneous Randers metric
\end{abstract}

\maketitle

\section{Introduction}
\subsection{Homogeneous geodesic and equigeodesic in Riemannian and Finsler geometry}
On a Riemannian manifold, a geodesic is called {\it homogeneous} if it is the orbit of a one-parameter subgroup of isometries. This notion reflects a key philosophy in Erlanger Programme, i.e., applying Lie theory to the study of geometry.
Many important topics in homogeneous Riemannian geometry are closely related to the study of homogeneous geodesic. For example, O. Kowalski and L. Vanhecke defined a Riemannian manifold to be {\it geodesic orbit} (or {\it g.o.} in short), if each geodesic is homogeneous \cite{KV1991}. Both g.o. spaces and their subclasses, normal homogeneous \cite{Be1961,WZ1985} and $\delta$-homogeneous spaces \cite{BN2008}, weakly symmetric spaces \cite{Zi1996}, etc, have been extensively studied in Riemannian geometry \cite{AA2007,AN2009,DKN2004,Go1996,GN2018,Ni2013,WZ1985}. Recently, they have been generalized and studied in Finsler geometry \cite{De2012,La2007,Xu2018,Xu2021,XD2017,YD2014,ZX2022}.

Equigeodesic was introduced by N. Cohen, L. Grama and C.J.C. Negreiros in 2010 \cite{CGN2010}.
A smooth curve on a homogeneous space $G/H$ is called an {\it equigeodesic} if it is
a homogeneous geodesic for any $G$-invariant metric. When the type of the metric is specified,
Riemannian, Finsler, Randers, $(\alpha,\beta)$, etc,
the corresponding equigeodesic is called a {\it Riemannian equigeodesic}, {\it Finsler equigeodesic}, {\it Randers equigeodesic}, {\it $(\alpha,\beta)$ equigeoesic}, etc, respectively. Until now, only Riemannian equigeodesic has been studied for some special homogeneous spaces, flag manifolds, spheres, etc \cite{GN2011,St2020,WZ2013}. Finsler and other types of equigeodesic are also worthy to be studied because they represent different algebraic properties.
\subsection{Reduction to equigeodesic vector}
A vector $X\in\mathfrak{g}$ is called a {\it Riemannian}
equigeodesic vector for $G/H$ if it generates a Riemannian equigeodesic $c(t)=\exp tX\cdot o$
passing $o=eH$.

By homogeneity, the study for Riemannian equigeodesic
can be reduced to that for Riemannian equigeodesic vector.
Further reduction need more assumptions for $G/H$.

In this paper, we only consider a homogeneous manifold $G/H$ with a compact $(G,\langle\cdot,\cdot\rangle_{\mathrm{bi}})$, in which
$\langle,\rangle_{\mathrm{bi}}$ is an  $\mathrm{Ad}(G)$-invariant
inner product on
$\mathfrak{g}$. We denote $\mathfrak{g}=\mathfrak{h}+\mathfrak{m}$ the corresponding orthogonal reductive decomposition, and use subscripts $\mathfrak{h}$ and $\mathfrak{m}$ for projections to each subspace.
Then
any $G$-invariant Riemannian
on $G/H$ is one-to-one determined by an $\mathrm{Ad}(H)$-invariant inner product
$$\alpha(u,v)=\langle u,\Lambda(v)
\rangle_{\mathrm{bi}},\quad\forall u,v\in\mathfrak{m},$$
in which $\Lambda:\mathfrak{m}\rightarrow\mathfrak{m}$ is called the {\it metric operator}.

By the criterion for geodesic vector \cite{KV1991},   $X\in\mathfrak{g}$ is
a Riemannian equigeodesic for $G/H$ if and only if $X\notin\mathfrak{h}$ and
\begin{equation}\label{014}
[\Lambda(X_\mathfrak{m}),X]_\mathfrak{m}=0,\quad\forall \Lambda,
\end{equation}
where the metric operator $\Lambda$ exhausts
all $\mathrm{Ad}(H)$-invariant $\langle\cdot,\cdot\rangle_{\mathrm{bi}}$-positive definite
linear endomorphisms on $\mathfrak{m}$. In particular, we can take $\Lambda=\mathrm{Id}$, then (\ref{014}) implies, when $X$ is a Riemannian equigeodesic vector, $X_\mathfrak{h}$ and $X_\mathfrak{m}$ commute, and then $X_\mathfrak{m}$ is
also a Riemannian equigeodesic vector which generates the same curve $c(t)=\exp tX_\mathfrak{m}\cdot o=\exp tX\cdot o$ as $X$.

To summarize, the study of (Riemannian) equigeodesics for $G/H$ can be reduced to that of
 (Riemannian) equigeodesic vectors which are contained in $\mathfrak{m}$.
{\it Finsler}, {\it Randers},  $(\alpha,\beta)$ (and other types of) {\it equigeodesic vectors} can be defined similarly, i.e., $X\in\mathfrak{g}$ which generates an equigeodesic $c(t)=\exp tX\cdot o$ of the specified type. Obviously they are also Riemannian equigeodesic vectors. So above reduction is still valid.

\subsection{Main results}
In this paper, we mainly discuss Randers and $(\alpha,\beta)$ equigeodesics. Firstly, we prove they are equivalent for a compact homogeneous manifold.

\begin{theorem}\label{main-thm-1}
Let $G/H$ be a homogeneous manifold with a compact Lie group $(G,\langle\cdot,\cdot\rangle_{\mathrm{bi}})$ and the corresponding orthogonal reductive decomposition
$\mathfrak{g}=\mathfrak{h}+\mathfrak{m}$. Suppose that $\mathfrak{m}$ can be further $\mathrm{Ad}(H)$-invariantly decomposed as $\mathfrak{m}=\mathfrak{m}_0+\mathfrak{m}'$, in which $\mathfrak{m}_0\neq0$ is the fixed point set for the $\mathrm{Ad}(H)$-action on $\mathfrak{m}$. Then for any $X\in\mathfrak{m}\backslash\{0\}$, the following statements are equivalent:
\begin{enumerate}
\item $X$ is an
$(\alpha,\beta)$ equigeodesic vector;
\item $X$ is a Randers equigeodesic vector;
\item $X$ is a Riemannian equigeodesic vector and $[X,\mathfrak{m}_0]\subset\mathfrak{h}$.
\end{enumerate}
\end{theorem}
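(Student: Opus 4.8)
The plan is to prove the cycle of implications $(1)\Rightarrow(2)\Rightarrow(3)\Rightarrow(1)$, exploiting the fact that a homogeneous $(\alpha,\beta)$ metric on $G/H$ is built from a $G$-invariant Riemannian metric $\alpha$ together with a $G$-invariant $1$-form $\beta$, the latter dual (via $\alpha$) to an $\mathrm{Ad}(H)$-fixed vector $V\in\mathfrak{m}_0$, with $\alpha(V,V)$ small enough to keep strong convexity. Thus the data of a homogeneous $(\alpha,\beta)$ metric is a pair $(\Lambda,V)$ with $\Lambda$ an $\mathrm{Ad}(H)$-invariant metric operator and $V\in\mathfrak{m}_0$; Randers is the special case with a particular $(\alpha,\beta)$-profile. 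The implication $(1)\Rightarrow(2)$ is then essentially immediate since Randers metrics form a subclass of $(\alpha,\beta)$ metrics, so an $(\alpha,\beta)$ equigeodesic vector is automatically a Randers equigeodesic vector.

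For $(2)\Rightarrow(3)$, I would use the known criterion for a geodesic vector of a homogeneous $(\alpha,\beta)$ (in particular Randers) metric: $X\in\mathfrak{m}$ generates a homogeneous geodesic of the metric determined by $(\Lambda,V)$ iff an equation of the schematic form
\begin{equation*}
\langle[\Lambda(X),X]_{\mathfrak{m}},Y\rangle_{\mathrm{bi}}
 + (\text{coefficient})\cdot\langle[V,X]_{\mathfrak{m}},Y\rangle_{\mathrm{bi}} = 0
 \quad\text{for all }Y\in\mathfrak{m}
\end{equation*}
holds, where the scalar coefficient depends on $\alpha(V,X)$, $\alpha(X,X)$, etc., and is generically nonzero. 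Taking first $V=0$ recovers \eqref{014} and shows $X$ is a Riemannian equigeodesic vector, so $[\Lambda(X),X]_{\mathfrak{m}}=0$ for all $\Lambda$; what remains in the geodesic equation is the term $(\text{coefficient})\cdot[V,X]_{\mathfrak{m}}=0$. Since $X$ must generate a geodesic for \emph{all} admissible $(\Lambda,V)$, I would choose $V\in\mathfrak{m}_0$ and $\Lambda$ so that the coefficient is nonzero (e.g. take $\Lambda=\mathrm{Id}$ and $V$ with $\alpha(V,X)\neq0$ when $X$ has a nonzero $\mathfrak{m}_0$-component, and argue separately otherwise), forcing $[V,X]_{\mathfrak{m}}=0$ for every $V\in\mathfrak{m}_0$. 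Combined with $[\mathfrak{m}_0,X]\subset\mathfrak{m}$ failing only into $\mathfrak{h}$, this yields $[X,\mathfrak{m}_0]\subset\mathfrak{h}$, which is $(3)$.

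For $(3)\Rightarrow(1)$, suppose $X$ is a Riemannian equigeodesic vector with $[X,\mathfrak{m}_0]\subset\mathfrak{h}$. Given any admissible pair $(\Lambda,V)$ with $V\in\mathfrak{m}_0$, I must verify the $(\alpha,\beta)$ geodesic equation above. The first term vanishes because $[\Lambda(X),X]_{\mathfrak{m}}=0$ (Riemannian equigeodesic). For the second term, $[V,X]\in\mathfrak{h}$ by hypothesis, hence $[V,X]_{\mathfrak{m}}=0$, so that term vanishes too regardless of the scalar coefficient; thus $X$ generates a homogeneous geodesic for every homogeneous $(\alpha,\beta)$ metric, i.e. $X$ is an $(\alpha,\beta)$ equigeodesic vector. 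The main obstacle I anticipate is pinning down the precise form of the homogeneous $(\alpha,\beta)$ geodesic criterion and, in the $(2)\Rightarrow(3)$ step, handling the degenerate cases where the scalar coefficient multiplying $[V,X]_{\mathfrak{m}}$ might vanish for structural (not generic) reasons — this requires a careful choice of test data $(\Lambda,V)$ and possibly a case split according to whether $X\in\mathfrak{m}_0$, $X\in\mathfrak{m}'$, or neither. Everything else is linear algebra with the $\mathrm{Ad}(H)$-invariant decomposition $\mathfrak{m}=\mathfrak{m}_0+\mathfrak{m}'$.
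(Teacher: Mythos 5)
Your plan is correct and follows essentially the same route as the paper: the cycle $(1)\Rightarrow(2)\Rightarrow(3)\Rightarrow(1)$ via the geodesic-vector criterion for homogeneous $(\alpha,\beta)$/Randers metrics (the paper's Lemma~\ref{lemma-1}), with $(2)\Rightarrow(3)$ obtained by subtracting the Riemannian equigeodesic condition from the Randers criterion and letting the $1$-form vary over $\mathfrak{m}_0$, and $(3)\Rightarrow(1)$ by checking the criterion term by term. The only details to make explicit are that, by Schur's lemma, every metric operator satisfies $\Lambda(\mathfrak{m}_0)=\mathfrak{m}_0$ --- needed because the $\beta$-term in the criterion is $\alpha([X,Z]_\mathfrak{m},u)=\langle Z,[\Lambda(u),X]\rangle_{\mathrm{bi}}$ rather than $\langle Z,[u,X]\rangle_{\mathrm{bi}}$, so your conclusion $[X,\mathfrak{m}_0]\subset\mathfrak{h}$ (and its use in $(3)\Rightarrow(1)$) requires $\Lambda(u)$ to stay in and sweep out $\mathfrak{m}_0$ --- and that for Randers ($\varphi(s)=1+s$) the scalar coefficient is $\sqrt{\alpha(X,X)}>0$, so the degenerate-coefficient case split you anticipate never arises.
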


As applications of Theorem \ref{main-thm-1}, we can classify Randers and $(\alpha,\beta)$ equigeodesic vectors on some compact homogeneous manifolds.
 To avoid iteration, $(\alpha,\beta)$ equigeodesic or
$(\alpha,\beta)$ equigeodesic vector will not be mentioned later.
We only concern those homogeneous manifolds which admits non-Riemannian homogeneous Randers metrics, otherwise there is no new story.

\begin{theorem}\label{main-thm-2}
Let $H\subsetneqq K\subsetneqq (G,\langle\cdot,\cdot\rangle_{\mathrm{bi}})$ be a triple of distinct compact connected Lie groups, such that $G/K$ is  strongly isotropy irreducible, and $H$ is normal in $K$.
Denote $\mathfrak{m}_0$ and $\mathfrak{m}'$ the $\langle\cdot,\cdot\rangle_{\mathrm{bi}}$-orthogonal complements of $\mathfrak{h}$ in $\mathfrak{k}$ and $\mathfrak{k}$ in $\mathfrak{g}$ respectively. Suppose that neither $\mathfrak{h}$ nor $\mathfrak{m}_0$ is an ideal of $\mathfrak{g}$.
Then the set of all Randers equigeodesic vectors for $G/H$ which are contained in $\mathfrak{m}=\mathfrak{m}_0+\mathfrak{m}'$ is  $\mathfrak{c}(\mathfrak{m}_0)\backslash\{0\}$.
\end{theorem}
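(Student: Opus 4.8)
The plan is to derive Theorem~\ref{main-thm-2} from Theorem~\ref{main-thm-1}, so the first task is to verify the hypotheses of the latter for the decomposition $\mathfrak{m}=\mathfrak{m}_0+\mathfrak{m}'$ and then to make condition (3) of Theorem~\ref{main-thm-1} explicit in the present setting. I would first record the elementary structural facts forced by the hypotheses: since $H$ is normal in $K$, $\mathfrak{h}$ is an ideal of $\mathfrak{k}$, and using $\mathrm{ad}$-invariance of $\langle\cdot,\cdot\rangle_{\mathrm{bi}}$ together with the orthogonality $\mathfrak{m}_0\perp\mathfrak{h}$ one gets $[\mathfrak{h},\mathfrak{m}_0]=0$; consequently $\mathfrak{m}_0$ is an ideal of $\mathfrak{k}$, $\mathfrak{k}=\mathfrak{h}\oplus\mathfrak{m}_0$ is a direct sum of ideals, each of $\mathfrak{h},\mathfrak{m}_0,\mathfrak{m}'$ is $\mathrm{Ad}(K)$-invariant, and $[\mathfrak{k},\mathfrak{m}']\subseteq\mathfrak{m}'$. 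Next I would check that $\mathfrak{m}_0$ is exactly the fixed point set of the $\mathrm{Ad}(H)$-action on $\mathfrak{m}$: it is contained in the fixed set because $[\mathfrak{h},\mathfrak{m}_0]=0$ and $H$ is connected, while the $\mathrm{Ad}(H)$-fixed subspace of $\mathfrak{m}'$ is $\mathrm{Ad}(K)$-invariant (this is where normality of $H$ in $K$ enters), hence equals $0$ or $\mathfrak{m}'$ by strong isotropy irreducibility of $G/K$; the alternative $\mathfrak{m}'$ would give $[\mathfrak{h},\mathfrak{g}]\subseteq\mathfrak{h}$, contradicting that $\mathfrak{h}$ is not an ideal of $\mathfrak{g}$. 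Since moreover $\mathfrak{m}_0\neq0$ because $H\subsetneqq K$, Theorem~\ref{main-thm-1} applies.

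The key simplification is that, for $X=X_0+X'\in\mathfrak{m}$ with $X_0\in\mathfrak{m}_0$ and $X'\in\mathfrak{m}'$, condition (3) of Theorem~\ref{main-thm-1} becomes: $X$ is a Riemannian equigeodesic vector and $[X,\mathfrak{m}_0]=0$. Indeed $[X_0,\mathfrak{m}_0]\subseteq\mathfrak{m}_0$ and $[X',\mathfrak{m}_0]\subseteq\mathfrak{m}'$ by the structural facts, so $[X,\mathfrak{m}_0]\subseteq\mathfrak{m}$, and $\mathfrak{m}\cap\mathfrak{h}=0$ forces $[X,\mathfrak{m}_0]\subseteq\mathfrak{h}$ to mean $[X,\mathfrak{m}_0]=0$. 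I would then observe that the centralizer of $\mathfrak{m}_0$ in $\mathfrak{m}'$ is again $\mathrm{Ad}(K)$-invariant, hence $0$ or $\mathfrak{m}'$; it cannot be $\mathfrak{m}'$, because $[\mathfrak{m}',\mathfrak{m}_0]=0$ together with $[\mathfrak{h},\mathfrak{m}_0]=0$ and $[\mathfrak{m}_0,\mathfrak{m}_0]\subseteq\mathfrak{m}_0$ would make $\mathfrak{m}_0$ an ideal of $\mathfrak{g}$, against the hypothesis. Thus $[X,\mathfrak{m}_0]=0$ with $X\in\mathfrak{m}$ forces $X'=0$, so $X=X_0$ lies in $\mathfrak{c}(\mathfrak{m}_0)$, the centralizer of $\mathfrak{m}_0$ in $\mathfrak{m}$, which by the same splitting coincides with the center of the subalgebra $\mathfrak{m}_0$. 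This gives the inclusion ``$\subseteq$''.

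For the reverse inclusion I would show every nonzero $X_0\in\mathfrak{c}(\mathfrak{m}_0)$ is a Randers equigeodesic vector. By Theorem~\ref{main-thm-1} it suffices to check $X_0$ is a Riemannian equigeodesic vector, since $[X_0,\mathfrak{m}_0]=0\subseteq\mathfrak{h}$. Using the criterion (\ref{014}): for any $\mathrm{Ad}(H)$-invariant positive definite metric operator $\Lambda$, the vector $\Lambda(X_0)$ is again $\mathrm{Ad}(H)$-fixed by equivariance of $\Lambda$, hence lies in $\mathfrak{m}_0$; and $[\Lambda(X_0),X_0]=0$ because $X_0$ is central in $\mathfrak{m}_0$. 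Since $X_0\neq0$ implies $X_0\notin\mathfrak{h}$, (\ref{014}) holds, so $X_0$ is a Riemannian equigeodesic vector. Combining the two inclusions yields that the Randers equigeodesic vectors for $G/H$ contained in $\mathfrak{m}$ are exactly $\mathfrak{c}(\mathfrak{m}_0)\backslash\{0\}$.

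I expect the main obstacle to be the two appeals to strong isotropy irreducibility: one must make sure that the relevant subspaces (the $\mathrm{Ad}(H)$-fixed part of $\mathfrak{m}'$ and the centralizer of $\mathfrak{m}_0$ in $\mathfrak{m}'$) are genuinely $\mathrm{Ad}(K)$-invariant — which is precisely where normality of $H$ in $K$ is used — and then correctly invoke the hypotheses that $\mathfrak{h}$ and $\mathfrak{m}_0$ are not ideals of $\mathfrak{g}$ to rule out the degenerate alternatives. Everything else is routine bookkeeping with the reductive decomposition $\mathfrak{g}=\mathfrak{h}+\mathfrak{m}_0+\mathfrak{m}'$ and the criterion for Riemannian equigeodesic vectors.
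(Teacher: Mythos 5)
Your proposal is correct and takes essentially the same route as the paper: you establish the bracket relations for $\mathfrak{g}=\mathfrak{h}+\mathfrak{m}_0+\mathfrak{m}'$, use strong isotropy irreducibility plus the non-ideal hypotheses to show $\mathfrak{m}_0$ is the $\mathrm{Ad}(H)$-fixed set and that the centralizer of $\mathfrak{m}_0$ (and of $\mathfrak{h}$) in $\mathfrak{m}'$ vanishes (this is exactly the paper's Lemma \ref{lemma-2}), and then apply Theorem \ref{main-thm-1} in both directions. The only cosmetic differences are that you phrase the irreducibility arguments via $\mathrm{Ad}(K)$-invariance rather than $\mathrm{ad}(\mathfrak{k})$-invariance and verify the Riemannian criterion (\ref{014}) by noting $\Lambda(X_0)\in\mathfrak{m}_0$ from $\mathrm{Ad}(H)$-equivariance instead of invoking the Schur-lemma block form of $\Lambda$.
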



By the classification of compact irreducible symmetric spaces
\cite{He1962} and compact strongly isotropy irreducible
spaces \cite{Ma1961-1,Ma1961-2,Ma1966,Wo1968}, almost all $G/H$'s in Theorem \ref{main-thm-2} have no Randers equigeodesics or Randers equigeodesic vectors, except
those associated with compact irreducible Hermitian symmetric $G/K=G/(U(1)H)$, i.e.,
\begin{eqnarray}\label{011}
G/H&=&SU(n_1+n_2)/SU(n_1)SU(n_2),\
Sp(n)/SU(n),\
SO(n+2)/SO(n),\nonumber\\
& &SO(2n)/SU(n),\
E_6/SO(10), \
E_7/E_6,
\end{eqnarray}
have a line $\mathfrak{m}_0\backslash\{0\}$
of Randers equigeodesic vectors.
 As Theorem \ref{main-thm-2} has not assumed any effectiveness for the $G$-action on $G/K$, it can also tell us that the set of Randers equigeodesic vectors for $U(1)G/U(1)H$ with $G/H$ in (\ref{011}), which are contained in $\mathfrak{m}$, is the line
$\mathfrak{m}_0\backslash\{0\}$.

Finally, we give a classification of Randers equigeodesics for homogeneous spheres which permit non-Riemannian homogeneous Randers metrics.

\begin{theorem}\label{main-thm-3}
Let $G/H$ be a homogeneous sphere on which the compact connected Lie group $(G,\langle,\rangle_{\mathrm{bi}})$ acts effectively. Denote $\mathfrak{g}=\mathfrak{h}+\mathfrak{m}$ the corresponding orthogonal reductive decomposition and
assume $G/H$ admits non-Riemannian $G$-invariant Randers metrics, i.e.,  the fixed point set $\mathfrak{m}_0$ for the $\mathrm{Ad}(H)$-action on $\mathfrak{m}$ is not zero.
Then we have the following:
\begin{enumerate}
\item
the set of Randers equigeodesic vectors for $G/H$ which is contained in $\mathfrak{m}$ is $\mathfrak{m}_0\backslash\{0\}$ when $G/H=SU(n+1)/SU(n)$, $U(n+1)/U(n)$ or $Sp(n+1)U(1)/Sp(n)U(1)$;
\item
$G/H=Sp(n+1)/Sp(n)$ has no Randers equigeodesics or Randers equigeodesic vectors.
\end{enumerate}
\end{theorem}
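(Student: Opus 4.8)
The plan is to route everything through Theorem~\ref{main-thm-1}. By that theorem together with the reduction recalled in the introduction, a nonzero $X\in\mathfrak{m}$ is a Randers equigeodesic vector precisely when it is a Riemannian equigeodesic vector and $[X,\mathfrak{m}_0]\subseteq\mathfrak{h}$; and since a Randers equigeodesic is in particular a Riemannian equigeodesic, it is generated by a vector lying in $\mathfrak{m}\setminus\{0\}$, so classifying those vectors classifies the equigeodesics. Hence for each of the four presentations it suffices to determine
$$\mathcal{E}=\bigl\{X\in\mathfrak{m}\setminus\{0\}\ :\ X\text{ is a Riemannian equigeodesic vector and }[X,\mathfrak{m}_0]\subseteq\mathfrak{h}\bigr\}.$$
The conceptual weight is already carried by Theorem~\ref{main-thm-1}; what is left is explicit linear algebra inside $\mathfrak g$.

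For the three cases in (1) --- $SU(n+1)/SU(n)$ (with $n\ge 2$; the case $n=1$ is $S^3=Sp(1)/Sp(0)$, handled by (2)), $U(n+1)/U(n)$, and $Sp(n+1)U(1)/Sp(n)U(1)$ --- I would write out the orthogonal reductive decomposition $\mathfrak{g}=\mathfrak{h}\oplus\mathfrak{m}_0\oplus\mathfrak{m}'$, with $\mathfrak{m}'$ an $\operatorname{Ad}(H)$-invariant complement of $\mathfrak{m}_0$ in $\mathfrak{m}$. In each of these, $\mathfrak{m}_0$ is a line $\mathbb{R}Z$ (one may take $Z=\operatorname{diag}(\mathbf{i},\dots,\mathbf{i},-n\mathbf{i})$, resp. $Z=\operatorname{diag}(0,\dots,0,\mathbf{i})$, resp. the combination of the corner generator $\mathbf{i}\in\mathfrak{sp}(1)$ and the extra $\mathfrak{u}(1)$-generator orthogonal to $\mathfrak{h}$), and the facts to check by direct matrix computation are $[\mathfrak{m}_0,\mathfrak{m}_0]=0$, $[Z,\mathfrak{m}']\subseteq\mathfrak{m}'$, and that $\operatorname{ad}(Z)|_{\mathfrak{m}'}$ is nonsingular (it acts essentially as a complex structure, i.e.\ as $\mathbf{i}$-multiplication on $\mathbb{C}^n$ or on $\mathbb{H}^n$, and as a plane rotation on the leftover $\operatorname{Im}\mathbb{H}$-directions in the $Sp$ case). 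Granting these: if $X=X_0+X'$ with $0\ne X'\in\mathfrak{m}'$, then $[X,Z]=-\operatorname{ad}(Z)X'$ is a nonzero element of $\mathfrak{m}'$, so $[X,\mathfrak{m}_0]\not\subseteq\mathfrak{h}$ (because $\mathfrak{m}'\cap\mathfrak{h}=0$) and $X\notin\mathcal{E}$; conversely every $X\in\mathfrak{m}_0\setminus\{0\}$ lies in $\mathcal{E}$, since $[X,\mathfrak{m}_0]=0$ and, every metric operator $\Lambda$ being $\operatorname{Ad}(H)$-equivariant and hence preserving the fixed line $\mathfrak{m}_0$ and acting there by a scalar, $[\Lambda(X),X]=0$. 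Therefore $\mathcal{E}=\mathfrak{m}_0\setminus\{0\}$, which is (1). (These three are also instances of Theorem~\ref{main-thm-2}, with an intermediate $K$ for which $G/K$ is $\mathbb{CP}^n$, $\mathbb{CP}^n$, and $\mathbb{CP}^{2n+1}$; the direct route just avoids verifying that theorem's hypotheses.)

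For case (2), $G/H=Sp(n+1)/Sp(n)$ (allowing $n=0$, i.e.\ $S^3$), the decomposition has $\mathfrak{h}=\mathfrak{sp}(n)$, $\mathfrak{m}_0=\mathfrak{sp}(1)=\operatorname{Im}\mathbb{H}$ (the bottom-right corner subalgebra), and $\mathfrak{m}'=\mathbb{H}^n$ (the standard module). Here $\mathfrak{m}_0$ is a subalgebra with $[\mathfrak{m}_0,\mathfrak{m}_0]=\mathfrak{m}_0$ and $[\mathfrak{m}_0,\mathfrak{m}']\subseteq\mathfrak{m}'$, so $[\mathfrak{m}_0,\mathfrak{m}]\subseteq\mathfrak{m}$; since $\mathfrak{m}\cap\mathfrak{h}=0$, for $X=X_0+X'\in\mathfrak{m}$ the condition $[X,\mathfrak{m}_0]\subseteq\mathfrak{h}$ is equivalent to $[X,\mathfrak{m}_0]=0$, i.e.\ to $[X_0,\mathfrak{sp}(1)]=0$ and $[X',\mathfrak{sp}(1)]=0$. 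The first forces $X_0\in\mathfrak{z}(\mathfrak{sp}(1))=0$; the second forces $X'=0$, because $\mathbb{H}^n$ has no nonzero $\mathfrak{sp}(1)$-fixed vector (already $[\mathbf{i},X']=0$ forces $X'=0$). Hence $\mathcal{E}=\varnothing$, so there are no Randers equigeodesic vectors in $\mathfrak{m}$ and, by the reduction, no Randers equigeodesics, which is (2).

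I expect the only mildly delicate point to be the space $Sp(n+1)U(1)/Sp(n)U(1)$: the diagonally embedded $U(1)\subset H$ breaks the $\operatorname{Im}\mathbb{H}$-summand of the $Sp(n+1)/Sp(n)$ picture into the one-dimensional $\operatorname{Ad}(H)$-fixed line $\mathfrak{m}_0$ and a two-dimensional complement, and one must confirm that $\operatorname{ad}$ of the generator of $\mathfrak{m}_0$ is nonsingular both on that complement and on $\mathbb{H}^n$, so that the case-(1) argument applies verbatim. All the remaining verifications are routine matrix computations, so the substance of the proof is the reduction in the first paragraph plus the correct bookkeeping of the four reductive decompositions.
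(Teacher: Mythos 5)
Your proposal is correct, and it follows the paper's overall framework (reduce to vectors in $\mathfrak{m}$ and apply Theorem~\ref{main-thm-1}) but takes a genuinely different route through the individual cases. The paper disposes of $SU(n+1)/SU(n)$, $U(n+1)/U(n)$ and $Sp(n+1)/Sp(n)$ by exhibiting the intermediate groups $K$ with $G/K$ the Hermitian symmetric spaces $SU(n+1)/S(U(n)U(1))$, $U(n+1)/(U(n)U(1))$ and the quaternionic symmetric space $Sp(n+1)/Sp(n)Sp(1)$, and then quoting Theorem~\ref{main-thm-2}, which yields $\mathfrak{m}_0\backslash\{0\}$ in the first two cases and $\mathfrak{c}(\mathfrak{m}_0)\backslash\{0\}=\emptyset$ in the third; only for $Sp(n+1)U(1)/Sp(n)U(1)$ does the paper argue directly from Theorem~\ref{main-thm-1} via the bracket relations (\ref{013})--(\ref{012}), which is exactly your argument for that case. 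You instead treat all four presentations uniformly by hand: your key facts ($\Lambda$ preserves the line $\mathfrak{m}_0$ and acts by a scalar there, $[\mathfrak{m}_0,\mathfrak{m}']\subset\mathfrak{m}'$, and $\mathrm{ad}(Z)|_{\mathfrak{m}'}$ nonsingular for the three cases of (1); no nonzero $\mathrm{ad}(\mathfrak{sp}(1))$-annihilated vector in $\mathfrak{m}$ for case (2)) are correct and genuinely routine matrix checks, playing the role that Lemma~\ref{lemma-2} (strong isotropy irreducibility) plays in the paper. What the paper's route buys is that the nondegeneracy of $\mathrm{ad}(\mathfrak{m}_0)$ on $\mathfrak{m}'$ comes for free from the general Theorem~\ref{main-thm-2}, and the sphere cases are exhibited as instances of that classification; what your route buys is that you never have to verify the hypotheses of Theorem~\ref{main-thm-2} (strong isotropy irreducibility of $G/K$, normality of $H$ in $K$, and that neither $\mathfrak{h}$ nor $\mathfrak{m}_0$ is an ideal), and the whole proof stays at the level of explicit brackets. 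Two small remarks: the paper additionally uses the classification of homogeneous spheres (Table~\ref{table-1}) to identify the four presentations with $\mathfrak{m}_0\neq0$, which you take as given by the wording of the theorem -- acceptable for the literal statement; and your caveat restricting item (1) to $n\geq 2$ for $SU(n+1)/SU(n)$ is consistent with the paper's implicit assumption $\dim\mathfrak{m}_0=1$ in its Case~1 (for $n=1$ the isotropy is trivial, the fixed set is all of $\mathfrak{m}$, and the space coincides with $Sp(1)/Sp(0)$, so the answer is the empty set as in item (2)).
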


These results implies that studying equigeodesic in a  Finsler context might be even easier sometimes, where cleaner classification lists would be more achievable.

This paper is organized as following. In Section 2, we summarize some necessary knowledge on homogeneous
Randers and $(\alpha,\beta)$ spaces and prove Theorem \ref{main-thm-1}. In Section 3, we prove Theorem \ref{main-thm-2} and Theorem \ref{main-thm-3} as applications of Theorem \ref{main-thm-1}.

\section{Randers and $(\alpha,\beta)$ equigeodesic}
\subsection{Homogeneous Randers and $(\alpha,\beta)$ spaces}

Let $G/H$ be a homogeneous manifold
with a reductive decomposition $\mathfrak{g}=\mathfrak{h}+\mathfrak{m}$.
We decompose $\mathfrak{m}$  $\mathrm{Ad}(H)$-equivariantly as
$\mathfrak{m}=\mathfrak{m}_0+\mathfrak{m}'$, in which $\mathfrak{m}_0$
is the fixed point set for the $\mathrm{Ad}(H)$-action on $\mathfrak{m}$.

Any $G$-invariant $(\alpha,\beta)$ metric $F$ on $G/H$ is one-to-one determined by the $\mathrm{Ad}(H)$-invariant
Minkowski norm in the tangent space at $o=eH$, which can be presented as
\begin{equation}\label{000}
F(y)=\alpha(y,y)^{1/2}
\varphi(\tfrac{\alpha(y,u)}{\alpha(y,y)^{1/2}}).
\end{equation}
Here $\alpha$ is an $\mathrm{Ad}(H)$-invariant inner product on $\mathfrak{m}$, $u$ is a nonzero vector in $\mathfrak{m}_0$ and
$\varphi=\varphi(s)$ is a positive smooth function for $|s|\leq
b=\alpha(u,u)^{1/2}$ with the inequality
$$\varphi(s)-s\varphi'(s)+(b^2-s^2)\varphi''(s)>0$$
satisfied everywhere. In particular, this homogeneous metric $F$ is Randers when $\varphi(s)=1+s$, which is of the form
\begin{equation}\label{007}
F(y)=\alpha(y,y)^{1/2}+\alpha(y,u).
\end{equation}

A vector $X\in\mathfrak{g}$ is called a {\it geodesic vector} for $(G/H,F)$ if $c(t)=\exp tX\cdot o$ is a homogeneous geodesic. When $F$ is
$(\alpha,\beta)$ or Randers, we have the following criterion for geodesic vectors (see Proposition 3.4 in \cite{Ya2017}).

\begin{lemma}\label{lemma-1}
Keep all assumptions and notations in this subsection. Then $X\in\mathfrak{g}$ is a geodesic vector of the homogeneous $(\alpha,\beta)$
metric $F$ in (\ref{000}) if and only if $X\notin\mathfrak{h}$ and
\begin{equation}\label{001}
\alpha([X,Z]_\mathfrak{m},
(\varphi(s)-s\varphi'(s))X_\mathfrak{m}
+\varphi'(s)
\alpha(X_\mathfrak{m},X_\mathfrak{m})^{1/2}u)=0,\quad
\forall Z\in\mathfrak{m},
\end{equation}
where $s=\tfrac{\alpha(X_\mathfrak{m},u)}{
\alpha(X_\mathfrak{m},X_\mathfrak{m})^{1/2}}$.
In particular, when $\varphi(s)=1+s$, (\ref{001}) can be simplified as
\begin{equation}\label{002}
\alpha([X,Z]_\mathfrak{m},X_\mathfrak{m}+
\sqrt{\alpha(X_\mathfrak{m},X_\mathfrak{m})}u)=0.
\end{equation}
\end{lemma}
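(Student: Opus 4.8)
\medskip
\noindent\textbf{Proof proposal.}
The plan is to derive both \eqref{001} and \eqref{002} from Latifi's criterion for geodesic vectors of a homogeneous Finsler metric \cite{La2007}, combined with an explicit computation of the fundamental tensor of an $(\alpha,\beta)$ metric in the radial direction. Everything after recalling that criterion is a short differentiation.

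First I would recall Latifi's criterion: for a homogeneous Finsler space $(G/H,F)$ with reductive decomposition $\mathfrak{g}=\mathfrak{h}+\mathfrak{m}$, a vector $X\in\mathfrak{g}\backslash\mathfrak{h}$ is a geodesic vector if and only if
$$g_{X_\mathfrak{m}}(X_\mathfrak{m},[X,Z]_\mathfrak{m})=0\quad\text{for all }Z\in\mathfrak{m},$$
where $g_y(v,w)=\tfrac12\,\partial_t\partial_\tau F^2(y+tv+\tau w)|_{t=\tau=0}$ is the fundamental tensor of $F$ and $\mathfrak{m}$ is identified with $T_o(G/H)$. The hypothesis $X\notin\mathfrak{h}$ ensures $X_\mathfrak{m}\neq 0$, and the Cauchy--Schwarz inequality gives $|\alpha(X_\mathfrak{m},u)|\leq\alpha(X_\mathfrak{m},X_\mathfrak{m})^{1/2}\alpha(u,u)^{1/2}$, so $X_\mathfrak{m}$ lies in the range $|s|\leq b$ on which $F$ in \eqref{000} is a genuine Minkowski norm; hence $g_{X_\mathfrak{m}}$ is well defined.

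Next I would simplify $g_{X_\mathfrak{m}}(X_\mathfrak{m},\cdot)$ using the $2$-homogeneity of $F^2$. By Euler's relation, $g_y(y,v)=\tfrac12(dF^2)_y(v)$ for every admissible $y$ and every $v\in\mathfrak{m}$, so it suffices to differentiate $F^2(y)=\alpha(y,y)\varphi(s)^2$ with $s=s(y)=\alpha(y,u)\alpha(y,y)^{-1/2}$. A one-line computation gives $\alpha(y,y)(ds)_y(v)=\alpha(y,y)^{1/2}\alpha(v,u)-s\,\alpha(y,v)$, whence
$$\tfrac12(dF^2)_y(v)=\varphi(s)\,\alpha\!\Bigl(v,\;\bigl(\varphi(s)-s\varphi'(s)\bigr)y+\varphi'(s)\,\alpha(y,y)^{1/2}u\Bigr).$$
Setting $y=X_\mathfrak{m}$ and $v=[X,Z]_\mathfrak{m}$, so that $s=\alpha(X_\mathfrak{m},u)\alpha(X_\mathfrak{m},X_\mathfrak{m})^{-1/2}$, and dividing by $\varphi(s)>0$ turns Latifi's criterion into exactly \eqref{001}. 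For the Randers choice $\varphi(s)=1+s$ one has $\varphi'(s)=1$ and $\varphi(s)-s\varphi'(s)=1$, so \eqref{001} collapses to \eqref{002}.

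I expect the only non-routine ingredient to be Latifi's criterion itself, which is, however, standard and available in the literature (it also appears, in the $(\alpha,\beta)$ form, as \cite{Ya2017}); once it is in hand the argument is the elementary differentiation above. A minor point to treat with care is the boundary case $X_\mathfrak{m}\parallel u$, i.e. $|s|=b$: the conditions imposed on $\varphi$ in \eqref{000} are required for $|s|\leq b$ inclusive, so $F$ stays strongly convex and $g_{X_\mathfrak{m}}$ well defined there, and nothing in the argument changes.
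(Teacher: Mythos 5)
Your proposal is correct: the paper gives no proof of this lemma at all, simply citing Proposition 3.4 of \cite{Ya2017}, and your derivation --- Latifi's criterion $g_{X_\mathfrak{m}}(X_\mathfrak{m},[X,Z]_\mathfrak{m})=0$ from \cite{La2007}, reduced via Euler's relation $g_y(y,v)=\tfrac12 (dF^2)_y(v)$ and the radial differentiation of $F^2(y)=\alpha(y,y)\varphi(s)^2$ --- is exactly the standard argument behind that cited result, and the computation (including the division by $\varphi(s)>0$ and the Randers specialization $\varphi-s\varphi'=\varphi'=1$) checks out. Your remark that $X\notin\mathfrak{h}$ guarantees $X_\mathfrak{m}\neq0$ and that $|s|\leq b$ keeps $g_{X_\mathfrak{m}}$ well defined covers the only delicate points, so the proof is complete and self-contained where the paper relies on a reference.
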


See \cite{BCS2000,De2012,YD2014} for
more details on general and homogeneous Finsler geometry, as well as the related homogeneous geodesic and g.o. property.
\subsection{Proof of Theorem \ref{main-thm-1}}

Firstly, the argument from (1) to (2) is obvious because each Randers metric is an $(\alpha,\beta)$ metric.

Nextly, we prove the statement from (2) to (3). Let $\alpha(\cdot,\cdot)=\langle\cdot,\Lambda(\cdot)
\rangle_{\mathrm{bi}}$ be any $\mathrm{Ad}(H)$-invariant inner product on $\mathfrak{m}$, and $u$ any vector in $\mathfrak{m}_0$. Then the Rander equigeodesic vector $X\in\mathfrak{m}\backslash\{0\}$ is a geodesic vector for the homogeneous Randers metric
$F(y)=\alpha(y,y)^{1/2}+\alpha(y,u)$. Applying (\ref{002})
in Lemma \ref{lemma-1}, we get
\begin{equation}\label{003}
\alpha([X,Z]_\mathfrak{m},X)+\alpha(X,X)^{1/2}
\alpha([X,Z]_\mathfrak{m},u)=0.
\end{equation}

Obviously the Randers equigeodesic vector $X$
is a Riemannian equigeodesic vector, i.e.,
$\alpha([X,\mathfrak{m}]_\mathfrak{m},X)=0$.
So we have
\begin{eqnarray*}
0=\alpha([X,\mathfrak{m}]_\mathfrak{m},u)=
\langle[X,\mathfrak{m}],\Lambda(u)\rangle_{\mathrm{bi}}
=\langle\mathfrak{m},[X,\Lambda(u)]\rangle_{\mathrm{bi}},
\end{eqnarray*}
i.e., $[X,\Lambda(u)]\subset\mathfrak{h}$.
By Schur lemma, the metric operator $\Lambda$ for $\alpha$ must be of the form $\Lambda=\Lambda_0\oplus\Lambda'$ in which $\Lambda_0$ is any arbitrary $\langle\cdot,\cdot\rangle_{\mathrm{bi}}$-positive definite linear endomorphism on $\mathfrak{m}_0$ and $\Lambda'$ is some endomorphism on $\mathfrak{m}'$. As we have assumed
$u\neq0$, $\Lambda(u)=\Lambda_0(u)$ exhausts an open neighborhood of $u$ in $\mathfrak{m}_0$. So we get
$[X,\mathfrak{m}_0]\subset\mathfrak{h}$.

This ends the proof from (2) to (3).

Finally, we prove the statement from (3) to (1). Let $F$ be any $G$-invariant $(\alpha,\beta)$ metric on $G/H$, determined by
the Minkowski norm
$F(y)=\alpha(y,y)^{1/2}\varphi(\tfrac{\alpha(y,u)}{
\alpha(y,y)^{1/2}})$ on $\mathfrak{m}$. Since the Riemannian equigeodesic vector $X$ is a geodesic vector for the $G$-invariant Riemannian metric $\alpha$, we have
\begin{equation}\label{004}
\alpha([X,Z]_\mathfrak{m},X)=0,\quad\forall Z\in\mathfrak{m}.
\end{equation}
As we have observed, the metric operator $\Lambda$ of $\alpha$ preserves $\mathfrak{m}_0$. So we have
\begin{eqnarray}\label{005}
\alpha([X,\mathfrak{m}]_\mathfrak{m},u)=
\langle[X,\mathfrak{m}],
\Lambda(u)\rangle_{\mathrm{bi}}=
\langle \mathfrak{m},[X,\Lambda(u)]\rangle_{\mathrm{bi}}
\subset\langle\mathfrak{m},[X,\mathfrak{m}_0]\rangle_{\mathrm{bi}}
\subset\langle\mathfrak{m},\mathfrak{h}\rangle_{\mathrm{bi}}=0.
\end{eqnarray}
Summarizing (\ref{004}) and (\ref{005}), we get (\ref{001}) in Lemma \ref{lemma-1}, i.e., $X$ is a geodesic vector for $F$.
Since the $G$-invariant $(\alpha,\beta)$-metric $F$ is arbitrarily chosen,
$X$ is an $(\alpha,\beta)$ equigeodesic vector.

This ends the proof from (3) to (1).
\section{Applications of Theorem \ref{main-thm-1}}
\subsection{Proof of Theorem \ref{main-thm-2}}
We keep all relevant assumptions and notations for Theorem
\ref{main-thm-2}.
Then $\mathfrak{g}=\mathfrak{k}+\mathfrak{m}'$ and
$\mathfrak{g}=\mathfrak{h}+\mathfrak{m}=\mathfrak{h}
+(\mathfrak{m}_0+\mathfrak{m}')$ are
$\langle\cdot,\cdot\rangle_{\mathrm{bi}}$-orthogonal
reductive decompositions for $G/K$ and $G/H$ respectively.
Together with the ideal decomposition $\mathfrak{k}=\mathfrak{h}\oplus\mathfrak{m}_0$, they provide the following bracket relations:
\begin{equation}\label{006}
[\mathfrak{h},\mathfrak{m}' ]\subset\mathfrak{m}',\quad
[\mathfrak{m}_0,\mathfrak{m}']\subset\mathfrak{m}',\quad
[\mathfrak{h},\mathfrak{m}_0]=0,\quad [\mathfrak{h},\mathfrak{h}]\subset\mathfrak{h},\quad
[\mathfrak{m}_0,\mathfrak{m}_0]\subset\mathfrak{m}_0.
\end{equation}
It implies that the decomposition $\mathfrak{m}=\mathfrak{m}_0+\mathfrak{m}'$ is $\mathrm{Ad}(H)$-invariant.
Because $\mathfrak{m}_0$ is not
an ideal of $\mathfrak{g}$, $\mathfrak{m}_0\neq0$, i.e.,
$G/H$ admits $G$-invariant non-Riemannian Randers metrics.
The next lemma implies that $\mathfrak{m}_0$ is the fixed point set for the $\mathrm{Ad}(H)$-action on $\mathfrak{m}$.

\begin{lemma} \label{lemma-2}
Any vector $v\in\mathfrak{m}'$ satisfying
$[v,\mathfrak{h}]=0$ or $[v,\mathfrak{m}_0]=0$ must be 0.
\end{lemma}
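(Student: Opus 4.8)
The plan is, for each of the two stated conditions, to observe that the set of $v\in\mathfrak{m}'$ satisfying it forms an $\mathrm{ad}(\mathfrak{k})$-invariant linear subspace of $\mathfrak{m}'$, to invoke the strong isotropy irreducibility of $G/K$ to conclude this subspace is either $0$ or all of $\mathfrak{m}'$, and finally to rule out the second alternative using the hypothesis that neither $\mathfrak{h}$ nor $\mathfrak{m}_0$ is an ideal of $\mathfrak{g}$.

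First I would treat the condition $[v,\mathfrak{h}]=0$. Put $\mathfrak{n}=\{v\in\mathfrak{m}':[v,\mathfrak{h}]=0\}$, a linear subspace of $\mathfrak{m}'$. For $v\in\mathfrak{n}$, $x\in\mathfrak{k}$ and $h\in\mathfrak{h}$, the Jacobi identity gives $[h,[x,v]]=[[h,x],v]+[x,[h,v]]=[[h,x],v]$; since $\mathfrak{k}=\mathfrak{h}\oplus\mathfrak{m}_0$ with $[\mathfrak{h},\mathfrak{h}]\subset\mathfrak{h}$ and $[\mathfrak{h},\mathfrak{m}_0]=0$ by (\ref{006}), we have $[h,x]\in\mathfrak{h}$, hence $[[h,x],v]=0$. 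Together with $[\mathfrak{k},\mathfrak{m}']\subset\mathfrak{m}'$ this shows $[x,v]\in\mathfrak{n}$, so $\mathfrak{n}$ is $\mathrm{ad}(\mathfrak{k})$-invariant. Because $G/K$ is strongly isotropy irreducible and $K$ is connected, $\mathrm{ad}(\mathfrak{k})$ acts irreducibly on $\mathfrak{m}'$, so $\mathfrak{n}=0$ or $\mathfrak{n}=\mathfrak{m}'$. If $\mathfrak{n}=\mathfrak{m}'$ then $[\mathfrak{h},\mathfrak{m}']=0$, and combining with $[\mathfrak{h},\mathfrak{h}]\subset\mathfrak{h}$ and $[\mathfrak{h},\mathfrak{m}_0]=0$ yields $[\mathfrak{h},\mathfrak{g}]\subset\mathfrak{h}$, contradicting that $\mathfrak{h}$ is not an ideal of $\mathfrak{g}$; hence $\mathfrak{n}=0$. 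The case $[v,\mathfrak{m}_0]=0$ is parallel: with $\mathfrak{n}'=\{v\in\mathfrak{m}':[v,\mathfrak{m}_0]=0\}$, for $v\in\mathfrak{n}'$, $x\in\mathfrak{k}$, $w\in\mathfrak{m}_0$ one gets $[w,[x,v]]=[[w,x],v]$, and $[w,x]\in\mathfrak{m}_0$ because $[\mathfrak{m}_0,\mathfrak{h}]=0$ and $[\mathfrak{m}_0,\mathfrak{m}_0]\subset\mathfrak{m}_0$, so $[[w,x],v]=0$ and $\mathfrak{n}'$ is $\mathrm{ad}(\mathfrak{k})$-invariant; then $\mathfrak{n}'=\mathfrak{m}'$ would give $[\mathfrak{m}_0,\mathfrak{m}']=0$, making $\mathfrak{m}_0$ an ideal of $\mathfrak{g}$, again a contradiction, so $\mathfrak{n}'=0$.

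The only delicate point — and the heart of the matter — is verifying the $\mathrm{ad}(\mathfrak{k})$-invariance of $\mathfrak{n}$ and $\mathfrak{n}'$, where one uses crucially the ideal decomposition $\mathfrak{k}=\mathfrak{h}\oplus\mathfrak{m}_0$ coming from the normality of $H$ in $K$ (which yields $[\mathfrak{h},\mathfrak{m}_0]=0$). Everything else is a direct appeal to strong isotropy irreducibility together with the standing assumption that $\mathfrak{h}$ and $\mathfrak{m}_0$ are not ideals of $\mathfrak{g}$.
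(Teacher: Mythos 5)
Your proposal is correct and follows essentially the same route as the paper: both identify the centralizer subspaces $\mathfrak{c}_{\mathfrak{m}'}(\mathfrak{h})$ and $\mathfrak{c}_{\mathfrak{m}'}(\mathfrak{m}_0)$, show they are $\mathrm{ad}(\mathfrak{k})$-invariant via the bracket relations coming from $\mathfrak{k}=\mathfrak{h}\oplus\mathfrak{m}_0$, invoke strong isotropy irreducibility of $G/K$, and rule out the alternative $\mathfrak{m}'$ because it would make $\mathfrak{h}$ or $\mathfrak{m}_0$ an ideal of $\mathfrak{g}$. You merely spell out the Jacobi-identity verification of invariance that the paper leaves implicit in (\ref{006}).
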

\begin{proof}
Firstly, we prove $v\in\mathfrak{m}'$ satisfying $[v,\mathfrak{m}_0]=0$ must vanish.
Assume conversely $v\neq0$, then (\ref{006}) implies that the nonzero subspace $\mathfrak{c}_{\mathfrak{m}'}(\mathfrak{m}_0)
\subset\mathfrak{m}'$
is $\mathrm{ad}(\mathfrak{k})$-invariant. By the strongly isotropy irreducibility of $G/K$, we must have $\mathfrak{c}_{\mathfrak{m}'}(\mathfrak{m}_0)=
\mathfrak{m}'$, i.e., $[\mathfrak{m}',\mathfrak{m}_0]=0$. Together with
$[\mathfrak{k},\mathfrak{m}_0]\subset\mathfrak{m}_0$, it implies that $\mathfrak{m}_0$ is an ideal of  $\mathfrak{g}$. This contradicts the assumption in Theorem \ref{main-thm-2}.

Nextly, we prove $v\in\mathfrak{m}'$ satisfying $[v,\mathfrak{h}]=0$ must vanish. Assume conversely that it does not, then similar argument shows that $\mathfrak{c}_{\mathfrak{m}'}(\mathfrak{h})=
\mathfrak{m}'$.
Then $\mathfrak{h}$ is an ideal of $\mathfrak{g}$, but this contradicts the assumption in Theorem \ref{main-thm-2}.
\end{proof}

\begin{proof}[Proof of Theorem \ref{main-thm-2}]
Firstly, we prove that any $X\in\mathfrak{c}(\mathfrak{m}_0)\backslash\{0\}$ is
a Randers  equigeodesic vector. Lemma \ref{lemma-2} indicates that $\mathfrak{m}'$ does not contain any trivial sub-representation for the $\mathrm{Ad}(H)$-action. By Schur Lemma, the metric operator $\Lambda$ for a $G$-invariant Riemannian metric on $G/H$ must be of the form $\Lambda=\Lambda_0\oplus\Lambda'$, in which
each $\Lambda_0$ and $\Lambda'$ are linear endomorphisms on $\mathfrak{m}_0$ and $\mathfrak{m}'$ respectively. So for every metric operator $\Lambda$, we have
$$[\Lambda(X),X]_\mathfrak{m}\subset
[\mathfrak{m}_0,\mathfrak{c}(\mathfrak{m}_0)
]_\mathfrak{m}=0,$$
which proves $X$ is a Riemannian equigeodesic vector. Meanwhile, it is obvious that
$[X,\mathfrak{m}_0]=0
\subset\mathfrak{h}$. By Theorem \ref{main-thm-1}, $X\in\mathfrak{c}(\mathfrak{m}_0)\backslash\{0\}$ is a Randers  equigeodesic vector.

Nextly, we prove there are no other Randers  equigeodesic vectors. Suppose $X=X_0+X'\in\mathfrak{m}\backslash\{0\}$ with  $X_0\in\mathfrak{m}_0$ and $X'\in\mathfrak{m}'$
is a Randers  equigeodesic vector. By Theorem \ref{main-thm-1},
\begin{equation}\label{008}
[X,v]=[X_0,v]+[X',v]\in\mathfrak{h}, \quad\forall
v\in\mathfrak{m}_0.
\end{equation}
However, (\ref{006}) indicates that $[X_0,v]+[X',v]\in\mathfrak{m}_0+\mathfrak{m}'$ for $v\in\mathfrak{m}_0$.
So we must have
$[X_0,\mathfrak{m}_0]=[X',\mathfrak{m}_0]=0$, i.e.,
$X_0\in\mathfrak{c}(\mathfrak{m}_0)$, and by Lemma \ref{lemma-2}, $X'=0$.

This ends the proof of Theorem \ref{main-thm-2}.
\end{proof}
\label{section-3-1}

\subsection{Proof of Theorem \ref{main-thm-3}}

The classification for homogeneous spheres \cite{Bo1940,MS1943} can be summarized as Table \ref{table-1}, where we list the decomposition of each $\mathfrak{m}$, where $\mathfrak{m}_0\neq0$ is a trivial $H$-representation, and $\mathfrak{m}_i$'s with $i>0$ are distinct nontrivial irreducible $H$-representations.


\begin{table}
  \centering
  \begin{tabular}{|c|c|c|c|c|}
     \hline
 No. &Spheres &$G$ &$H$ &Isotropy rep.\\
 \hline
1 &$\mathrm{S}^n$ &${SO}(n+1)$ &${SO}(n)$ &irred\\
2 &$\mathrm{S}^{2n+1}$ &${SU}(n+1)$ &${SU}(n)$ &$\mathfrak m=\mathfrak m_0\oplus\mathfrak m_1$\\
3 &$\mathrm{S}^{2n+1}$ &$U(n+1)$ &$U(n)$ &$\mathfrak m=\mathfrak m_0\oplus\mathfrak m_1$\\
4 &$\mathrm{S}^{4n+3}$ &${Sp}(n+1)$ &${Sp}(n)$ &$\mathfrak m=\mathfrak m_0\oplus\mathfrak m_1$\\
5 &$\mathrm{S}^{4n+3}$ &${Sp}(n+1){Sp}(1)$ &${Sp}(n){Sp}(1)$ &$\mathfrak m=\mathfrak m_1\oplus\mathfrak m_2$\\
6 &$\mathrm{S}^{4n+3}$ &${Sp}(n+1)U(1)$ &${Sp}(n)U(1)$ &$\mathfrak m=\mathfrak m_0\oplus\mathfrak m_1\oplus\mathfrak m_2$\\
7 &$\mathrm{S}^{15}$ &$\mathrm{Spin}(9)$ &$\mathrm{Spin}(7)$ &$\mathfrak m=\mathfrak m_1\oplus\mathfrak m_2$\\
8 &$\mathrm{S}^7$ &$\mathrm{Spin}(7)$ &$G_2$ &irred\\
9 &$\mathrm{S}^7$ &$G_2$ &${SU}(3)$ &irred\\
 \hline
   \end{tabular}
  \caption{Classification for homogeneous spheres}\label{table-1}
\end{table}


We only concern those with $\mathfrak{m}_0$, which admits
non-Riemannian $G$-invariant Randers metrics. So $G/H$ is
one of the following:
\begin{eqnarray*}
& &S^{2n+1}=U(n+1)/U(n),\ S^{2n+1}=SU(n+1)/SU(n),\\
& &S^{4n+3}=Sp(n+1)/Sp(n),\   S^{4n+3}=Sp(n+1)U(1)/Sp(n)U(1).
\end{eqnarray*}

{\bf Case 1}: $G/H=SU(n+1)/SU(n)$ or $U(n+1)/U(n)$.

In this case, $\dim\mathfrak{m}_0=1$ and
$G/H$ is associated with the Hermitian symmetric space $G/K=SU(n+1)/S(U(n)U(1))$
or $U(n+1)/(U(n)U(1))$, Theorem \ref{main-thm-2} tells us
that the set of Randers equigeodesic vectors which are contained in $\mathfrak{m}$ is the line $\mathfrak{m}_0\backslash\{0\}$.

{\bf Case 2}: $G/H=Sp(n+1)/Sp(n)$.

In this case $G/H$ is associated with the quaternion symmetric $G/K=Sp(n+1)/Sp(n)Sp(1)$, and $\mathfrak{m}_0$ is the Lie subalgebra $sp(1)$. By Theorem \ref{main-thm-2}, the set of Randers equigeodesic vectors for $G/H$ which is contained in $\mathfrak{m}$ is  $\mathfrak{c}(\mathfrak{m}_0)\backslash\{0\}=\emptyset$.

{\bf Case 3}: $G/H=Sp(n+1)U(1)/Sp(n)U(1)$.

In this case, the decomposition $\mathfrak{m}=\mathfrak{m}_0+\mathfrak{m}_1+\mathfrak{m}_2$
can be explicitly presented as following. We choose $\langle (A,a),(B,b)\rangle_{\mathrm{bi}}=-\mathrm{tr}AB+ab$, for
$(A,a),(B,b)\in sp(n+1)\oplus\mathbb{R}$, in which $sp(n+1)=\{X\in\mathbb{H}^{(n+1)\times(n+1)}|
X+\overline{X}^t=0\}$. The subalgebra $\mathfrak{h}$ consists $(\mathrm{diag}(C,a\mathbf{i}),a)$ for all $C\in sp(n)$
and $a\in\mathbb{R}$. Then $\mathfrak{m}_0$ consists of
$(\mathrm{diag}(0_{n\times n},a\mathbf{i}),-a)$ for all $a\in \mathbb{R}$,
$\mathfrak{m}_1$ consists of $(\mathrm{diag}(0_{n\times n},b\mathbf{j}+c\mathbf{k}),0)$
for all $b,c\in\mathbb{R}$, and $\mathfrak{m}_2$ consists of
the pairs $(\left(
             \begin{array}{cc}
               0_{n\times n} & \mathrm{v} \\
               -\overline{\mathrm{v}}^t & 0 \\
             \end{array}
           \right),0)
$ for all column vectors $\mathrm{v}\in\mathbb{H}^n$.
Then direct calculations show the following:
\begin{eqnarray}
& &\label{013}
[\mathfrak{m}_0,\mathfrak{m}_0]=0,\quad
[\mathfrak{m}_0,\mathfrak{m}_1]\subset\mathfrak{m}_1,\quad
[\mathfrak{m}_0,\mathfrak{m}_2]\subset\mathfrak{m}_2\\
& &\label{012}
[u,v]=0\mbox{ for }u\in\mathfrak{m}_0\mbox{ and }v\in\mathfrak{m}_1+\mathfrak{m}_2\mbox{ if and only if }
u=0 \mbox{ or } v=0.
\end{eqnarray}

Firstly, we prove each $X\in\mathfrak{m}_0\backslash\{0\}$ is
a Randers equigeodesic for $G/H$.
Every metric operator must have the form
$\Lambda=c_0\mathrm{Id}|_{\mathfrak{m}_0}
\oplus c_1\mathrm{Id}|_{\mathfrak{m}_1}
\oplus c_2\mathrm{Id}|_{\mathfrak{m}_2}$, so
$[\Lambda(X),X]_\mathfrak{m}=[c_0 X,X]=0$,
and we see $X$ is a Riemannian equigeodesic vector.
It is obvious that $[X,\mathfrak{m}_0]=0\subset\mathfrak{h}$.
So $X$ is a Randers equigeodesic by Theorem \ref{main-thm-1}.

Nextly, we prove that each Randers equigeodesic $X$ for $G/H$ must belong to $\mathfrak{m}_0$. Denote $X=X_0+X_1+X_2$ with each $X_i\in\mathfrak{m}_i$.
By Theorem \ref{main-thm-1}, $[X,\mathfrak{m}_0]=[X_1+X_2,\mathfrak{m}_0]\subset\mathfrak{h}$.
However, by (\ref{013}),
$[X_1+X_2,\mathfrak{m}_0]\subset\mathfrak{m}_1+\mathfrak{m}_2$.
So $[X_1+X_2,\mathfrak{m}_0]=0$, and it implies $X_1=X_2=0$ by (\ref{012}).

This ends the case-by-case proof for Theorem \ref{main-thm-3}.

{\bf Acknowledgement}.
This paper is supported by Beijing Natural Science Foundation (No. 1222003), National Natural Science Foundation of China (No. 12001007, No. 12131012, No. 11821101),
Natural Science Foundation of Anhui province (No. 1908085QA03).

\end{document}